\newcommand{\keywords}[1]{\par\addvspace\baselineskip
\noindent\keywordname\enspace\ignorespaces#1}
\newcommand\thlink[1]{\href{http://us.metamath.org/mpegif/#1.html}{\textsf{#1}}}
\newcommand{\down}{{\downarrow}}
\DeclareMathOperator{\card}{card}
\DeclareMathOperator{\dom}{dom}
\DeclareMathOperator{\ran}{ran}
\DeclareMathOperator{\seq}{Seq}
\DeclareMathOperator{\supp}{supp}
\newcommand{\gch}{{\rm GCH}}
\begin{document}

\mainmatter  

\title{GCH implies AC, a Metamath Formalization}


\author{Mario Carneiro}
%

\institute{The Ohio State University, Columbus OH, USA}

\maketitle

\begin{abstract}
We present the formalization of Specker's ``local'' version of the claim that the Generalized Continuum Hypothesis implies the Axiom of Choice, with particular attention to some extra complications which were glossed over in the original informal proof, specifically for ``canonical'' constructions and Cantor's normal form.
\keywords{Generalized Continuum Hypothesis $\cdot$ Axiom of Choice $\cdot$ Metamath $\cdot$ Cantor's normal form $\cdot$ canonical construction $\cdot$ formal proof}
\end{abstract}

\section{Introduction}\label{sec:intro}

The Metamath system, consisting of a formal proof language and computer verification software, was developed for the purpose of formalizing mathematics in a minimalistic foundational theory \cite{metamath}. Although Metamath supports arbitrary axiom systems, the main result of this paper was performed within the \texttt{set.mm} database, which formalizes much of the traditional mathematics curriculum into a ZFC-based axiomatization \cite{setmm}. All the theorems in this paper have been formalized and verified for correctness by the Metamath program, and the presence of alternative independently-written verifiers ensure added confidence in the correctness of the proof.

The Generalized Continuum Hypothesis (GCH) is the statement that there are no infinite cardinals $\frak m,\frak n$ in the ordering relationship $\frak m<\frak n<2^\frak m$, and the Axiom of Choice (AC), in one formulation, states that every cardinal is well-orderable. In Metamath, in order to sidestep the complications of defining a cardinal as a set in the absence of choice, we define a cardinal simply as any set, and live with the fact that equality of cardinals is no longer the set-theoretic $x=y$ relation but rather the equinumerosity relation, denoted $x\approx y$.

As both the GCH and AC are of the form $\forall x P(x)$ for an appropriate property $P(x)$, it is meaningful to consider ``local'' versions of each statement. The statement commonly denoted as ${\sf CH}(\frak m)$, means that either $\frak m$ is finite or $\forall\frak n,\break\lnot(\frak m<\frak n<2^\frak m)$. In Metamath we call sets $x$ such that ${\sf CH}(|x|)$ GCH-sets, and define the class of all of them as
$$\gch={\rm Fin}\cup\{x\mid\forall y\,\lnot(x\prec y\land y\prec{\cal P}x)\}$$
where $\rm Fin$ is the class of finite sets (\thlink{df-gch}\footnote{The sans-serif labels mentioned in this paper refer to definitions or theorem statements in \texttt{set.mm}; they can be viewed at e.g. \url{http://us.metamath.org/mpegif/gchac.html} for \thlink{gchac}.}). Thus the Metamath notation $x\in\gch$ corresponds to ${\sf CH}(|x|)$ in the usual notation, and the Generalized Continuum Hypothesis itself is expressed in Metamath notation as $\gch=V$.

The axiom of choice is also expressible in this format. The function $\card(x)$ is defined as the intersection of all ordinals which are equinumerous to $x$, when this intersection exists. Thus $x\in\dom\card$ iff there is an ordinal equinumerous to $x$, which is equivalent to the statement that $x$ is well-orderable; this is used as the standard idiom to express well-orderability, and $\dom\card=V$ is an AC equivalent (\thlink{dfac10}).

\section{GCH implies AC}\label{sec:gchac}

In order to prove $\gch=V\to{\rm AC}$, it is sufficient to prove a statement of the form
\begin{equation}\label{eq:lgch}
\omega\preceq x\land x_1\in\gch\land\dots\land x_k\in\gch\to x\in\dom\card,
\end{equation}
where each $x_i$ is some expression of $x$ which is provably a set assuming $x$ is, because then for any set $y$, setting $x=\omega\cup y$ in (\ref{eq:lgch}) we can prove the assumptions using $\gch=V$ and $\omega\subseteq\omega\cup y$, and then $y$ is well-orderable because it is a subset of the well-orderable set $\omega\cup y$. Thus we call a theorem of the form (\ref{eq:lgch}) a ``local'' form of $\gch\to{\rm AC}$.

The original proof by Sierpi\'{n}ski that GCH implies AC \cite{sierpinski} in fact shows
$$\omega\preceq x\land x\in\gch\land {\cal P}x\in\gch\land {\cal PP}x\in\gch\to x\in\dom\card,$$
and this result was later refined by Specker \cite{specker} to
$$\omega\preceq x\land x\in\gch\land {\cal P}x\in\gch\to \aleph(x)\approx{\cal P}x,$$
where $\aleph(x)$ is the Hartogs number of $x$, the least ordinal which does not inject into $x$. This implies that ${\cal P}x$ and a fortiori $x$ are well-orderable (since $x\preceq{\cal P}x$), and so this is also a local form of $\gch\to{\rm AC}$.

\begin{theorem} [Specker, \textmd{\thlink{gchhar}}]\label{th:gchhar} If $\frak m$ is an infinite cardinal such that ${\sf CH}(\frak m)$ and ${\sf CH}(2^\frak m)$, then $\aleph(\frak m)=2^\frak m$. Or in Metamath notation:
$$\omega\preceq x\land x\in\gch\land {\cal P}x\in\gch\to \aleph(x)\approx{\cal P}x.$$
\end{theorem}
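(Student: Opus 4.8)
The plan is to write $\kappa=\aleph(x)$ for the Hartogs number of $x$, which is always an ordinal (an aleph) that does not inject into $x$, i.e. $\kappa\not\preceq x$, and to reduce the stated equinumerosity to the single assertion that $\mathcal Px$ is well-orderable, $\mathcal Px\in\dom\card$. Indeed, once $\mathcal Px$ injects into some ordinal, so does the subset $x\preceq\mathcal Px$, giving $x\in\dom\card$ with $|x|=\aleph_\gamma$ for some $\gamma$; then $\kappa=\aleph(x)=\aleph_{\gamma+1}=|x|^+$, and Cantor's theorem $x\prec\mathcal Px$ together with $x\in\gch$ (no cardinal strictly between $|x|$ and $|\mathcal Px|$) pins $\mathcal Px\approx\aleph_{\gamma+1}=\kappa$, which is the claim. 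Throughout I would freely use that $\omega\preceq x$ makes $x$, and hence $\mathcal Px$ and $\mathcal{PP}x$, Dedekind-infinite, so that $1+|x|=|x|$ and the powers are additively idempotent, e.g. $\mathcal Px\sqcup\mathcal Px\approx\mathcal Px$ and $\mathcal{PP}x\sqcup\mathcal{PP}x\approx\mathcal{PP}x$; these absorptions are what let later cardinal sums be compared against the Cantor tower $x\prec\mathcal Px\prec\mathcal{PP}x$.

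The first main ingredient is a Hartogs bound at the correct level, $\kappa=\aleph(x)\preceq\mathcal{PP}x$, proved \emph{without} choice. The naive bound sends each $\alpha<\kappa$ to a well-ordering of a subset of $x$, but that choice of representative is not canonical and costs an extra power, whereas the squeeze below only affords working inside $\mathcal{PP}x$. This is where Cantor's normal form enters: finite sequences over a well-ordered set can be well-ordered canonically through their base-$\omega$ (more precisely, coefficient) expansions, yielding $\seq(\kappa)\approx\kappa$ and, crucially, a choice-free encoding of the order types that the Hartogs construction requires. I expect assembling this canonical injection, rather than invoking a representative well-ordering for each ordinal, to be one of the delicate points the abstract flags.

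With $\kappa\preceq\mathcal{PP}x$ and $\kappa\not\preceq x$ in hand, the second ingredient is a two-level squeeze using the two $\gch$ hypotheses, aimed at $\mathcal Px\preceq\kappa$ (which well-orders $\mathcal Px$). The idea is to sandwich well-chosen cardinal sums between consecutive rungs of the Cantor tower: a sum such as $\mathcal Px\sqcup\kappa$ lies between $\mathcal Px$ and $\mathcal{PP}x$ by additive idempotence, so $\mathcal Px\in\gch$ forces it to one endpoint, while $x\sqcup\kappa$ is governed by $x\in\gch$ once it is known to sit below $\mathcal Px$. Driving this descent all the way to $\kappa\approx\mathcal Px$ is the crux, and the genuinely hard case is excluding the possibility that such a sum collapses onto the top rung $\mathcal{PP}x$ instead of descending; this is exactly where Specker's combinatorial input is needed, which in a formalization I would route through the choice-free fact that a powerset never injects into the finite sequences over its own underlying set, so that a mis-descent would contradict $\mathcal P(w)\not\preceq\seq(w)$.

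Finally, having obtained $\mathcal Px\preceq\kappa$, the ordinal $\kappa$ witnesses $\mathcal Px\in\dom\card$, hence $x\in\dom\card$, and the reduction of the first paragraph upgrades this via Cantor and $x\in\gch$ to the desired $\aleph(x)\approx\mathcal Px$. The main obstacle, as the abstract signals, is not the cardinal arithmetic as such but keeping every construction canonical so that no appeal to choice sneaks in, concretely the choice-free Hartogs bound and the Cantor-normal-form well-ordering of finite sequences, together with the delicate case analysis in the $\gch$-driven squeeze; the hypothesis $\omega\preceq x$ is what makes the additive idempotences available to run that squeeze at all.
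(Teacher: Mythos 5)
Your outline has the right global shape --- the opening reduction (once ${\cal P}x$ is well-orderable, $x\in\gch$ plus Cantor forces $\aleph(x)\approx{\cal P}x$) is exactly how the Kanamori--Pincus argument behind \thlink{gchhar} ends, and the two squeezes $x\preceq x\sqcup\kappa\preceq{\cal P}x$ and ${\cal P}x\preceq{\cal P}x\sqcup\kappa\preceq{\cal PP}x$ are its real engine --- but both of your technical ingredients are mislocated, and two steps as stated would fail. First, the bound $\aleph(x)\preceq{\cal PP}x$ is not a choice-free fact, and Cantor normal form cannot deliver it. The naive Hartogs bound does not lose a power because of a non-canonical choice of representatives: one sends $\alpha$ to the set of \emph{all} well-orderings of subsets of $x$ of type $\alpha$, which is perfectly canonical; what it costs is the square, landing in ${\cal PP}(x\times x)$, because a well-ordering is a set of pairs. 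Improving this to ${\cal PP}x$ amounts to collapsing $x\times x$ to $x$, and no manipulation of $\seq(\kappa)\approx\kappa$ on the well-ordered side can do that for a set $x$ not yet known to be well-orderable; the natural encodings of order types by subsets of ${\cal P}x$ are invariant under bijections and so conflate equinumerous ordinals such as $\omega$ and $\omega+1$. In the actual proof this bound is purchased with the hypothesis $x\in\gch$: one first proves $x\sqcup x\approx x$ and then $x\times x\approx x$ by a squeeze $x\preceq x\times x\preceq{\cal P}x$ whose bad alternative $x\times x\approx{\cal P}x$ is refuted by Halbeisen--Shelah, since $x\times x$ injects into $\seq(x)$ as the length-two sequences. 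That is where \thlink{pwfseq} actually does its work, and Cantor normal form sits one level deeper still: inside the proof of \thlink{pwfseq} itself, to build the canonical family of injections $\seq(Y)\to Y$ over all infinite well-ordered $\langle Y,\sqsubset\rangle$ (\thlink{infxpenc}), which is what Section~\ref{sec:bpos} of the paper is about.

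Second, your plan to \emph{exclude} the collapse-up cases of the squeezes cannot succeed, because those cases are not contradictory: under ZFC with GCH one has precisely $x\sqcup\aleph(x)\approx{\cal P}x$ (as $\aleph_\alpha+\aleph_{\alpha+1}=\aleph_{\alpha+1}=2^{\aleph_\alpha}$), so ZF certainly cannot refute it. Halbeisen--Shelah gives no traction here either: to invoke it you would need $x\sqcup\kappa\preceq\seq(x)$, i.e.\ $\kappa\preceq\seq(x)$, which is hopeless since in the very situation you are trying to refute $\kappa$ has the cardinality of ${\cal P}x$. What is needed at this point --- and is missing from your outline --- is a second consequence of the canonical well-ordering theorem \thlink{fpwwe2}: a Specker-type absorption lemma, ${\cal P}(A\sqcup A)\preceq A\sqcup B\rightarrow{\cal P}A\preceq B$. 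In the bottom collapse-up case take $A=x$, $B=\kappa$ (legitimate because $x\sqcup x\approx x$): from ${\cal P}x\approx x\sqcup\kappa$ it yields ${\cal P}x\preceq\kappa$, which is not a contradiction but the desired well-ordering of ${\cal P}x$, and Schr\"oder--Bernstein against $\kappa\preceq{\cal P}x$ (the descending outcome of the top squeeze) gives $\kappa\approx{\cal P}x$; the top collapse-up case ${\cal P}x\sqcup\kappa\approx{\cal PP}x$ is absorbed the same way one level up (take $A={\cal P}x$), after which your own reduction finishes. The only case refuted outright is the collapse-down $x\sqcup\kappa\approx x$, which contradicts $\kappa\not\preceq x$. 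In short: keep your squeezes, but Halbeisen--Shelah belongs in the idempotence lemmas $x\sqcup x\approx x\approx x\times x$ (and, via them, in the Hartogs bound), while the $\kappa$-cases must be handled by \thlink{fpwwe2}-style absorption rather than by refutation.
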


This is the main result of the paper, and the proof follows Kanamori \& Pincus \cite{kanamori} closely (indeed, almost all of \cite{kanamori} was formalized as a result of this project). As the complete argument is presented in formal detail in \thlink{gchhar} and in informal detail in \cite{kanamori}, we will not rehash the details here, but instead point out areas where the informal and formal proofs diverge, indicating places where a full proof is not as simple as it might seem at first glance. The reader is encouraged to consult \cite{kanamori} for an overview of the proof and \thlink{gchhar} for the complete formalization (which is not difficult to read after a little practice).

\section{Canonical Constructions}\label{sec:bpos}

The main divergence from the text proof concerns a certain non-injectibility result, Proposition 1.7 of \cite{kanamori}:

\begin{theorem} [Halbeiben--Shelah, \textmd{\thlink{pwfseq}}]\label{th:gchhar} If $\aleph_0\le|X|$, then $|{\cal P}(X)|\nleq|\seq(X)|$, where $\seq(X)=\bigcup_{n\in\omega}X^n$ is the set of finite sequences on $X$. Or in Metamath notation:
$$\omega\preceq X\to\lnot\,{\cal P}X\preceq\bigcup_{n\in\omega}X^n.$$
\end{theorem}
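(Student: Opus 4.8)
The plan is to argue by contradiction. Suppose $F\colon{\cal P}X\to\seq(X)$ is an injection. The engine of the proof is the observation that for every infinite \emph{well-orderable} subset $Y\subseteq X$ one has $|\seq(Y)|=|Y|<|{\cal P}Y|$, using the aleph arithmetic $\kappa\cdot\kappa=\kappa$ and $\aleph_0\cdot\kappa=\kappa$ to evaluate $|\seq(Y)|=\sum_{n\in\omega}|Y|^n=|Y|$. Consequently $F$ cannot carry all of ${\cal P}Y$ into $\seq(Y)$: there must be some $A\subseteq Y$ whose image $F(A)$ is a finite sequence using at least one entry of $X\setminus Y$. I would use this repeatedly to build, by transfinite recursion, a strictly increasing chain of well-ordered subsets of $X$, each step harvesting a genuinely new element of $X$, and thereby inject the Hartogs ordinal $\aleph(X)$ into $X$ -- contradicting the defining property of $\aleph(X)$.

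First I would carry a well-ordering $W_\alpha$ of $Y_\alpha$ along with the recursion, so that $\seq(Y_\alpha)$ is well-ordered as well. The delicate point is to select the witnessing subset \emph{canonically}, since ${\cal P}Y_\alpha$ is not well-orderable and one cannot simply take a least witness. The fix is a choice-free Cantor diagonalization: the ``internal'' subsets $I_\alpha=\{A\subseteq Y_\alpha\mid F(A)\in\seq(Y_\alpha)\}$ inject into $\seq(Y_\alpha)$ via $F$, hence are well-ordered, say $I_\alpha=\langle A_i\mid i<\mu\rangle$ with $\mu\le|Y_\alpha|$, while $W_\alpha$ lists $Y_\alpha=\langle y_j\mid j<|Y_\alpha|\rangle$. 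Then the diagonal set $D_\alpha=\{y_i\mid i<\mu,\ y_i\notin A_i\}$ differs from every $A_i$ at the point $y_i$, so $D_\alpha\notin I_\alpha$, i.e.\ $F(D_\alpha)\notin\seq(Y_\alpha)$; and $D_\alpha$ is determined outright by $W_\alpha$ and $F$. The new elements are then $\ran(F(D_\alpha))\setminus Y_\alpha$, a nonempty finite set that comes pre-ordered by the indices of the finite sequence $F(D_\alpha)$.

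I would start from a countable $Y_0\subseteq X$ with a well-ordering of type $\omega$, available since $\omega\preceq X$. At a successor I set $Y_{\alpha+1}=Y_\alpha\cup\ran(F(D_\alpha))$ and extend $W_\alpha$ to $W_{\alpha+1}$ by appending the new elements in their sequence order, so that $W_\alpha$ is an initial segment of $W_{\alpha+1}$; at limits I take unions of the $Y_\alpha$ and of the $W_\alpha$, which remain well-orderings precisely because each extension is end-extending. Since $Y_\alpha\subsetneq Y_{\alpha+1}$ at every successor, the map sending $\alpha$ to the $W_{\alpha+1}$-least new element added at stage $\alpha$ is injective, and running the recursion through all $\alpha<\aleph(X)$ produces an injection $\aleph(X)\hookrightarrow X$, the desired contradiction.

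The main obstacle, and the reason this result belongs to the section on canonical constructions, is exactly the choice-free extraction of $D_\alpha$: because ${\cal P}Y_\alpha$ carries no available well-ordering, the only way to name a non-internal subset is the explicit diagonal above, and this must be threaded through a recursion in which the well-ordering $W_\alpha$ is itself constructed and extended by definable rules. Verifying that the $W_\alpha$ cohere into genuine well-orderings across limit stages, and that the counting identity $|\seq(Y_\alpha)|=|Y_\alpha|$ holds uniformly along the chain, are the places where a fully formal treatment demands noticeably more care than the textbook sketch.
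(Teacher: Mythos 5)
Your skeleton is essentially the paper's, repackaged: your diagonal set $D_\alpha$ is the formal proof's $D_{Y,\sqsubset}$ in disguise, and your explicit transfinite recursion, harvesting one new element of $X$ per stage and injecting the Hartogs number $\aleph(X)$ into $X$ for a contradiction, is what the paper obtains instead by handing the choice function $F(Y,\sqsubset)$ to Kanamori--Pincus Theorem 1.1 (\thlink{fpwwe2}) and deriving the contradiction from the resulting fixed point $Z$. The end-extension bookkeeping at limits and the handling of the finitely many new elements are fine, so this difference of route (direct recursion plus Hartogs, versus an abstract fixed-point theorem for well-ordered subsets) is real but minor.

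The genuine gap is the step you pass off as aleph arithmetic. Your diagonal needs $I_\alpha$ enumerated as $\langle A_i\mid i<\mu\rangle$ with $\mu$ no longer than the enumeration of $Y_\alpha$, and you assert this is ``determined outright by $W_\alpha$ and $F$''. What $W_\alpha$ and $F$ determine outright is the length-lexicographic well-ordering of $F[I_\alpha]\subseteq\seq(Y_\alpha)$, and its order type can greatly exceed $\mathrm{ot}(W_\alpha)$: already when $\mathrm{ot}(W_\alpha)=\omega$, the length-lex type of $\seq(Y_\alpha)$ is $\omega^\omega$, so the diagonal exhausts the $y_i$ long before it has treated every $A_i$, and $D_\alpha\notin I_\alpha$ no longer follows. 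To repair this you need an injection $\seq(Y_\alpha)\to Y_\alpha$ --- the function the paper calls $H_{Y,\sqsubset}$ --- and you need it \emph{uniformly}: it must be a single function definable from $(Y_\alpha,W_\alpha)$, because a ZF transfinite recursion cannot pick a fresh injection at each of $\aleph(X)$-many stages; that selection is itself an instance of choice. The pointwise identity $|\seq(\delta)|=|\delta|$ does not supply this: its classical proof via G\"odel pairing is canonical only when $\delta$ is multiplicatively indecomposable (e.g.\ an infinite cardinal), and the order types your chain produces are typically not of that kind (every successor stage yields $\delta+k$ with $k\geq 1$ finite, never a cardinal); for such $\delta$ the classical argument picks a bijection $\delta\to|\delta|$ noncanonically. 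You do flag at the end that the uniformity of $|\seq(Y_\alpha)|=|Y_\alpha|$ ``demands noticeably more care'', but this is not a detail to be checked --- it is precisely the hole the paper identifies, and filling it is what requires the canonical pairing functions built from Cantor normal form in Section~3 (\thlink{cnfcom3}, \thlink{infxpenc}). Without that ingredient, or an equivalent canonical construction, your recursion is not a valid choice-free argument.
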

\begin{proof}
Fix injections $J:\omega\to X$ and $G:{\cal P}X\to\seq(X)$, and suppose we are given an $H$ such that for every infinite well-ordered subset $\langle Y,\sqsubset\rangle,Y\subseteq X$, $H_{Y,\sqsubset}$ is an injection from $\seq(Y)$ to $Y$. Now consider some infinite $\langle Y,\sqsubset\rangle,Y\subseteq X$, and define
$$D_{Y,\sqsubset}=\{x\in Y\mid H_{Y,\sqsubset}^{-1}(x)\in\ran G\land x\notin G^{-1}(H_{Y,\sqsubset}^{-1}(x))\}.$$
Then if $G(D_{Y,\sqsubset})\in\seq(Y)$ one gets the contradiction $$H_{Y,\sqsubset}(G(D_{Y,\sqsubset}))\in D_{Y,\sqsubset}\leftrightarrow H_{Y,\sqsubset}(G(D_{Y,\sqsubset}))\notin D_{Y,\sqsubset},$$ so $D_{Y,\sqsubset}\in\seq(X)\setminus\seq(Y)$, and the minimal element of the sequence not in $Y$ is an element of $X\setminus Y$. Thus we can define $F(Y,\sqsubset)$ to be this element when $Y$ is infinite, and $F(Y,\sqsubset)=J(|Y|)$ when $Y$ is finite, and we will have defined a function from well-orders of subsets of $X$ to elements of $X$ such that \break $F(Y,\sqsubset)\in X\setminus Y$ when $Y$ is infinite. This is the necessary setup for application of Theorem 1.1 of \cite{kanamori} (formalized as \thlink{fpwwe2}), which gives a well-ordered subset $\langle Z,\sqsubset\rangle$ of $X$ satisfying $F(Z,\sqsubset)\in Z$ and $F(x\down,\sqsubset)=x$ for all $x\in Z$ (where $x\down=\{y\in Z\mid y\sqsubset x\}$). If $Z$ is infinite, then this contradicts the definition of $F$, but if $Z$ is finite, then $T=J(|Z|)\down$ is a proper subset of $Z$ such that $J(|T|)=J(|Z|)$, a contradiction.
\qed\end{proof}

There is one hole in this proof, namely the construction of an $H$ such that for every infinite well-ordered subset $\langle Y,\sqsubset\rangle$, $H_{Y,\sqsubset}$ is an injection from $\seq(Y)$ to $Y$. The original proof in \cite{kanamori} has this to say about such a function: \blockquote{For infinite, well-orderable $Y$, we have $|Y|=|\seq(Y)|$; in fact, to every infinite well-ordering of a set $Y$ we can canonically associate a bijection between $Y$ and $\seq(Y)$.}

Given a pairing function on $Y$, by which we mean a bijection $J:Y\times Y\to Y$, and an injection $g:\omega\to Y$, one can construct injections $f_n:Y^n\to Y$ by recursion as $f_0(\emptyset)=g(0)$ and $f_{n+1}(x)=J(f_n(x\restriction n),x(n))$ and define $g_n(x)=\langle n,f_n(x)\rangle$; then since the domain and range of each $g_n$ is disjoint the union of all of them is an injection from $\seq(Y)\to\omega\times Y$, and composing with $J\circ\langle g,I\rangle$ (where $I$ is the identity function) gives an injection $\seq(Y)\to Y$. Thus we are reduced to the question of finding a ``canonical'' pairing function $Y\times Y\to Y$.

This problem can be reduced still further to eliminate the auxiliary well-order $\sqsubset$ of $Y$; since there is a unique isomorphism from $\langle Y,\sqsubset\rangle$ to an ordinal $\alpha<\aleph({\cal P}X)$, we need only find a function that enumerates pairing functions for all ordinals less than $\aleph({\cal P}X)$.

\subsection{Canonical pairing functions}\label{sec:cpf}

The ``classical'' proof of $\alpha\times\alpha\approx\alpha$ (\thlink{infxpen}) using G\"{o}del's pairing function suffers from an inherent nonconstructibility in its approach, because it only produces a legitimate pairing function $\kappa\times\kappa\to\kappa$ when $\kappa$ is an infinite cardinal (or more generally when $\kappa$ is multiplicatively indecomposable), and on other ordinals $\alpha$ one picks(!) some bijection $\alpha\to|\alpha|$ to establish $\alpha\times\alpha\approx\alpha$ generally.

To avoid this, we make use of Cantor normal form, which in our version asserts that the function $f\mapsto\sum_{\gamma\in\supp(f)}\alpha^{\gamma}f(\gamma)$ (where the sum is taken from largest to smallest) is a bijection from the set of finitely supported functions $\beta\to\alpha$ to the ordinal exponential $\alpha^\beta$ (\thlink{cantnff1o}).

Reversing the sum does not preserve the ordinal value, but does preserve its cardinal because $\alpha+\beta\approx\alpha\sqcup\beta$ (where $\sqcup$ is cardinal sum or disjoint union), and similarly for $\alpha\beta\approx\alpha\times\beta\approx\beta\alpha$. Then for any $\beta<\alpha\le\omega^\alpha$, we can write
$$\beta=\sum_{i=1}^n\omega^{\beta_i}k_i\approx\sum_{i=n}^1\omega^{\beta_i}k_i= \omega^{\beta_1}k_1\approx k_1\omega^{\beta_1}=\omega^{\beta_1},$$
where all the equinumerosity relations are witnessed by explicit bijections \break(\thlink{cnfcom3}). This yields a proof that there is a function which enumerates bijections $\beta\to\omega^\gamma$ for some $\gamma(\beta)$ and all $\omega\le\beta<\alpha$, for any upper bound $\alpha$.

We can use this to produce a pairing function using the calculation
$$\beta\times\beta\approx\omega^\gamma\times\omega^\gamma\approx\omega^{\gamma2}
\approx\omega^{2\gamma}=(\omega^2)^\gamma\approx\omega^\gamma\approx\beta,$$
after fixing some bijection $\omega^2\approx\omega\times\omega\approx\omega$, where again the $\approx$ notation is being used as shorthand for an explicit bijection (\thlink{infxpenc}).

\subsubsection*{Acknowledgments.} The author wishes to thank G\'{e}rard Lang for the initial idea for this work and Asaf Karagila for pointing the author in the direction of Cantor normal form as a resolution of the issues in Section~\ref{sec:cpf}.

\end{document}